\documentclass[11pt, reqno]{amsart}

\usepackage[utf8]{inputenc}

\usepackage{fontenc}

\usepackage{amsmath, amsfonts, amssymb, amsthm, framed}

\usepackage[margin=1in]{geometry}

\usepackage{xcolor, float}

\usepackage[colorlinks=true,allcolors=blue]{hyperref}

\usepackage[noabbrev,capitalize]{cleveref}

\crefname{equation}{}{}

\crefname{enumi}{Property}{Properties}

\usepackage{comment}

\usepackage{dsfont}

\usepackage{etoolbox}

\patchcmd{\section}{\scshape}{\scshape\bfseries}{}{}

\makeatletter

\renewcommand{\@secnumfont}{\scshape\bfseries}

\makeatother

\newtheorem{theorem}{Theorem}[section]

\newtheorem{lemma}[theorem]{Lemma}

\newtheorem{corollary}[theorem]{Corollary}

\newcommand{\R}{\mathbb{R}}

\newcommand{\inv}{^{-1}}

\newcommand{\pr}{\mathbb{P}}

\usepackage{color}

\newcommand{\ignore}[1]{}

\title{Anti-concentration with respect to random permutations}

\author{Aaron Berger, Ross Berkowitz, Pat Devlin, Van Vu}


\begin{document}

\maketitle

\begin{abstract}
Classical anti-concentration results focus on the  random sum $S := \sum _{i=1}^n \xi_i v_i$, where $\xi_i$ are independent random variables and $v_i$ are real numbers.  In this paper, we prove new concentration results concerning  the random sum $S := \sum_{i=1}^n w_{\pi _i  } v_i $, where $w_i , v_i$ are real numbers and $\pi$ is a random permutation.
\end{abstract}

\section {Introduction}
Anti-concentration is a well known and important phenomenon in probability. An anti-concentration inequality asserts that (under certain conditions) a random variable $S$ does not have too much mass in a small region or on a single point. We follow Levy  \cite{Levy} and define the concentration function of a real random variable $Y$ as
\begin{equation}\label{CF}
Q (S, t)= \sup_{|I|=t}  \pr (S \in I) , 
\end{equation} 
where $I$ runs over the set of all closed intervals of length $t$.

Many prominent  mathematicians, including Littlewood, Offord, Erd\H os, Levy, Kolmogorov, Rogozin,  and Esseen made significant contribution to the early study of  the concentration function and the anti-concentration phenomenon \cite{LO, Esseen, Rog, Kol1, Kol2, Levy, Erdos}. Since the early 2000s, the study of anti-concentration has been revitalized,  with motivations and applications coming from various areas, including  random matrix theory, combinatorics, random functions,  and data science. New, stronger, anti-concentration inequalities have been discovered, and these play essential role in the solution of many long standing problems, such as the Circular Law conjecture in random matrix theory; see \cite{NVsurvey} for a survey.

In most of the above mentioned works, the random variable in question is a linear combination of many independent atom random variables. The Erd\H{o}s-Littlewood-Offord inequality, one of the earliest and most well-known results in this area, has the following form.

\begin{theorem}[Erd\H{o}s-Littlewood-Offord] \label{LOE} Let $v \in \R^n$ satisfy $|v_i| \ge 1$ for all $i \in [n]$ and let $\xi_i$ be iid Rademacher variables. Then for any interval $I$
\begin{equation*}
\pr ( \sum_{i=1}^n \xi_i v_i  \in I) = O\left( \frac{| I| +1 } {\sqrt n }\right).
\end{equation*}             
\end{theorem}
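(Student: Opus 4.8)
The plan is to follow Erdős's original combinatorial argument, reducing the general statement to a short-interval statement and then invoking Sperner's theorem. First I would reduce to the case $v_i \ge 1$ for all $i$: since the $\xi_i$ are symmetric and independent, $\sum_i \xi_i v_i$ has the same distribution as $\sum_i \xi_i |v_i|$, so we may replace each $v_i$ by $|v_i| \ge 1$. Next I would reduce to intervals of bounded length: any closed interval $I$ of length $t$ is covered by at most $t + 2 = O(|I|+1)$ intervals of length $1$ (for instance the translates $[k, k+1)$, $k \in \Z$, that it meets), so by the union bound it is enough to show $\pr\left(\sum_i \xi_i v_i \in J\right) = O(1/\sqrt n)$ whenever $|J| < 2$.

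For the core estimate, identify each sign vector $\xi \in \{-1,1\}^n$ with the set $A = \{i : \xi_i = 1\} \subseteq [n]$, which occurs with probability $2^{-n}$. Writing $f(A) = \sum_{i \in A} v_i$ we have $\sum_i \xi_i v_i = 2 f(A) - \sum_{i=1}^n v_i$, and if $A \subsetneq B$ then $f(B) - f(A) = \sum_{i \in B \sm A} v_i \ge 1$, so the two corresponding values of the sum differ by at least $2$. Consequently, for an interval $J$ with $|J| < 2$, the family $\{A \subseteq [n] : \sum_i \xi_i v_i \in J\}$ contains no two sets one of which properly contains the other; it is an antichain in the Boolean lattice, so by Sperner's theorem it has size at most $\binom{n}{\flo{n/2}}$. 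Dividing by $2^n$ and using $\binom{n}{\flo{n/2}} = O(2^n/\sqrt n)$ gives $\pr\left(\sum_i \xi_i v_i \in J\right) = O(1/\sqrt n)$, and combining with the union bound finishes the proof.

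I do not expect a genuine obstacle here, as this is the classical proof; the only points needing a little care are choosing the short-interval cover so that each piece is \emph{strictly} shorter than $2$ (so that the antichain property is forced) and keeping track of the additive $+1$ for small or degenerate $I$. As an alternative one could argue via characteristic functions, bounding $Q(S,t)$ using Esseen's inequality together with $\bigl|\prod_i \cos(v_i \theta)\bigr| \le \exp\bigl(-c \sum_i \|v_i\theta\|^2\bigr)$; this also yields the bound but is more computational and offers no advantage for the present statement.
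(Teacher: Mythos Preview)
The paper does not actually prove Theorem~\ref{LOE}; it is quoted in the introduction as the classical Erd\H{o}s--Littlewood--Offord inequality, with a reference to Erd\H{o}s's 1945 paper, and serves only as motivation for the permutation analogue that follows. Your proposal is exactly Erd\H{o}s's original argument---reduce to nonnegative $v_i$ by symmetry, cover a long interval by $O(|I|+1)$ short ones, and for a short interval observe that the admissible sign patterns form an antichain in the Boolean lattice, whence Sperner gives the $\binom{n}{\lfloor n/2\rfloor}/2^n = O(n^{-1/2})$ bound---and it is correct as written. There is nothing to compare against in the paper itself.
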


Notice that the random variable $\sum_{i=1}^n \xi_i v_i $ can be written as the inner product of the random vector $w := (\xi_1, \dots, \xi_n)$ with the deterministic vector $v =(v_1, \dots, v_n)$.

In this, and many other theorems (see \cite{NVsurvey}), $w$ is a random vector distributed according to the product measure generated by independent atom random variables.  In applications, another frequently used probability space is the space generated by random permutations.  The goal of this paper is to initiate the study  of anti-concentration results in the random permutation space.

Let $w= (w_1, \dots, w_n) $ be a fixed vector and $\pi$ a permutation in $S_n$.  We denote by $w_{\pi} $ the vector  $(w_{\pi (1) }, \dots, w_{\pi (n) } ) $  obtained by permuting the coordinates of $w$  by $\pi$. Fix $v \in \R^n $. Our object of study is the random variable  $w_{\pi }  \cdot v = \sum_i  w_i v_{\pi(i)}$, where $\pi$ is chosen uniformly at random from $S_n$.

  In a recent paper \cite{Soz}, Soze, motivated by an  application  concerning random polynomials,  considered a special case wheren $v= (1,2, \dots, n)$,  and proved

 \begin{theorem}  \label{thm:soze_main}

                Let $v= (1,2,\dots, n) $ and $w \in \R^n$ be a unit vector such that $w \cdot {\bf 1 } =0$. Then for any $L \in \R$

                $$\pr \left( |w_\pi \cdot v - Ln|  \le 1\right) =        O\left( \frac{ 1} {n} e^{-c |L| }\right), $$ for some absolute constant $c > 0$.            \end{theorem}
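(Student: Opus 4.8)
Throughout write $X:=w_\pi\cdot v=\sum_{i=1}^n w_i\,\pi(i)$, and assume without loss of generality that $L\ge 0$ (replacing $w$ by $-w$, which still meets the hypotheses, turns the case $L<0$ into this one). Since $\sum_i w_i=0$ and $\|w\|_2=1$ one computes $\E X=0$ and, from $\mathrm{Var}(\pi(i))=\frac{n^2-1}{12}$ and $\mathrm{Cov}(\pi(i),\pi(j))=-\frac{n+1}{12}$, that $\mathrm{Var}(X)=\frac{n(n+1)}{12}$; writing $X=\sum_i w_i(\pi(i)-\frac{n+1}{2})$ and applying Cauchy--Schwarz also gives $|X|\le\big(\sum_k(k-\tfrac{n+1}{2})^2\big)^{1/2}=O(n^{3/2})$. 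Thus $X$ lives on scale $\asymp n$ but has range only $O(n^{3/2})$, and the target bound simultaneously encodes anti-concentration at scale $2$ (the factor $1/n$) and an exponential large-deviation estimate (the factor $e^{-cL}$). The plan is to extract both from a single smoothed Fourier inversion with the contour shifted off the real axis.

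\emph{Reduction.} Fix a smooth $\psi\ge\ind_{[-1,1]}$ supported in $[-2,2]$, so that $\pr(|X-Ln|\le1)\le\E\psi(X-Ln)=\frac1{2\pi}\int_\R\widehat\psi(t)\,\Phi(t)\,e^{-itLn}\,dt$, where $\Phi(t):=\E e^{itX}$ is entire because $X$ is bounded. Moving the contour to the line $\{\im t=-s_0\}$ with $s_0:=c_0/n$ for a small absolute constant $c_0>0$ produces a factor $e^{-s_0Ln}=e^{-c_0L}$, and using $|\widehat\psi(u-is_0)|\lesssim e^{2s_0}(1+u^2)^{-1}$ leaves us to prove
\[
\int_\R\frac{\big|\Phi(u-is_0)\big|}{1+u^2}\,du=O\!\left(\frac1n\right).
\]
Now $\Phi(u-is_0)=\E\big[e^{(iu+s_0)X}\big]=\big(\E e^{s_0X}\big)\,\widehat\Phi_\mu(u)$, where $\widehat\Phi_\mu(u):=\E_\mu e^{iuX}$ and $d\mu:=e^{s_0X}\,d\pr/\E e^{s_0X}$ is the exponential tilt. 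The prefactor is $O(1)$: any standard Bernstein-type concentration inequality for a linear statistic of a uniform random permutation gives $\pr(|X|\ge tn)\le e^{-ct}$ for $t\ge 1$, whence $\E e^{s_0X}=O(1)$ once $c_0$ is small enough. So it remains to bound $\int(1+u^2)^{-1}|\widehat\Phi_\mu(u)|\,du$ by $O(1/n)$, uniformly in $s_0\le c_0/n$.

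\emph{Anti-concentration of the tilted law.} Sort so that $w_{(1)}\le\cdots\le w_{(n)}$ and pair position $(i)$ with position $(i+m)$ for $1\le i\le m:=\lfloor n/2\rfloor$. A short monotonicity argument — starting from $\sum_k(w_{(k)}-w_{(m)})^2\ge\sum_k w_{(k)}^2=1$ and comparing the summands $|w_{(k)}-w_{(m)}|$, $|w_{(k+m)}-w_{(m)}|$ against $|w_{(k+m)}-w_{(k)}|$ — gives $\sum_{i=1}^m(w_{(i+m)}-w_{(i)})^2\ge\frac12$. Build $\pi$ by first assigning an unordered pair of values to each position-pair (and fixing the leftover value when $n$ is odd) and then ordering the two values in each pair; conditioning under $\mu$ on everything but the $m$ orderings makes them mutually independent two-point random variables (biased coins), and a one-line computation of the conditional characteristic function yields
\[
\big|\widehat\Phi_\mu(u)\big|\le\E_\mu\prod_{i=1}^m\exp\!\big(-\kappa\,\sin^2(u\delta_i/2)\big),\qquad \delta_i:=(w_{(i+m)}-w_{(i)})(c_i-d_i),
\]
with $\{c_i,d_i\}$ the values in the $i$-th pair and $\kappa>0$ a constant that stays bounded below because the tilt is mild ($s_0\asymp1/n$ keeps each coin's variance $\asymp1$). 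Since typically $|c_i-d_i|\asymp n$ for a positive fraction of the pairs, one has $\sum_i\delta_i^2\gtrsim n^2$ with probability $\gtrsim1$; splitting the integral, for $|u|\le 1/n$ one uses $\sin^2(u\delta_i/2)\gtrsim u^2\delta_i^2$ (valid as $|\delta_i|<2n$) to get Gaussian decay $e^{-cu^2n^2}$, which integrates to $O(1/n)$, while for $|u|\ge1/n$ a positive proportion of the angles $u\delta_i$ stay away from $2\pi\Z$, making $\sum_i\sin^2(u\delta_i/2)$ large and the tail negligible. This gives the required $O(1/n)$ and completes the proof.

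\emph{Main obstacle.} The delicate step is the last one: one must run an Erd\H os--Littlewood--Offord-type estimate \emph{under the tilted law} $\mu$ rather than the uniform law, and although $s_0\asymp1/n$ keeps the biased coins non-degenerate, the Radon--Nikodym factor $e^{s_0X}$ can be as large as $e^{\Theta(\sqrt n)}$, so $\mu$ is not comparable to uniform and the $\mu$-law of the value-gaps $c_i-d_i$ must still be controlled. I expect this to require either a quantitative local estimate for $\mu$ (showing it distorts the pairing marginals by at most a constant for most pairs) or a dichotomy on the profile of $w$: in the ``peaky'' regime $\|w\|_\infty\gg n^{-1/2}$ one isolates the dominant coordinate $\pi(i_0)$, which is uniform over $n$ integers and already pins $\pr(X\in I)$ at $O(1/n)$, whereas the ``flat'' regime is precisely where $X$ is genuinely close to Gaussian and the Fourier argument above applies cleanly. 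A second, more routine, point is the upgrade from the $O(1/\sqrt m)=O(1/\sqrt n)$ bound that Theorem~\ref{LOE} supplies for a sum of $m$ independent two-point variables to the sharp $O(1/n)$, which is exactly what the smoothness input (large $\sum_i\delta_i^2$, no coarse arithmetic structure) provides.
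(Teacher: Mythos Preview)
First, a framing point: the paper does not prove Theorem~\ref{thm:soze_main}. It is quoted from Soze~\cite{Soz} as motivation, and the paper's own contributions are Theorem~\ref{thm:anti_conc_perm} (for general $v,w$) and its corollaries, established by a completely different route via Stanley's Sperner result, Dilworth's theorem, and inverse Littlewood--Offord theory. Those methods recover the $O(1/n)$ point-mass bound of Corollary~\ref{cor:soze_perm} but make no attempt at the exponential tail factor $e^{-c|L|}$, so there is no ``paper's own proof'' of this statement to compare against.

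As for your proposal: the architecture---smoothed Fourier inversion with a contour shift to capture the exponential decay, followed by anti-concentration of the tilted law---is a reasonable plan, but as you yourself flag in the final paragraph, the argument is not complete. Two gaps deserve emphasis. First, the tail estimate for $|u|\ge 1/n$ is asserted rather than proved: the claim that ``a positive proportion of the angles $u\delta_i$ stay away from $2\pi\Z$'' is exactly where arithmetic structure in $w$ can bite. For arbitrary unit $w$ with $w\cdot\mathbf{1}=0$, the gaps $w_{(i+m)}-w_{(i)}$ and the value-differences $c_i-d_i$ can conspire to make many $\delta_i$ near-commensurate, and then the product $\prod_i\exp(-\kappa\sin^2(u\delta_i/2))$ fails to be small at the corresponding resonances; you supply no mechanism to rule this out. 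Second, the passage from the $O(1/\sqrt n)$ that $m\asymp n$ independent two-point summands yield via Erd\H{o}s--Littlewood--Offord to the claimed $O(1/n)$ is not a routine upgrade: it needs precisely the inverse-theorem or smoothness input you gesture at, and under the tilted measure $\mu$ (whose density can be as large as $e^{\Theta(\sqrt n)}$, as you note) the required control on the conditional law of the pairs $(c_i,d_i)$ is a genuine open step. In short, the proposal is a sketch with the hard parts still unfinished.
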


 To give some intuition for the assumption that  $w \cdot {\bf 1 } =0$,  let us point out that if all the coordinates of $w$ are the same, then $w_\pi \cdot v $ is constant and no nontrivial anti-concentration statement holds. Thus, in statements involving random permutations, the anti-concentration of $w_\pi\cdot v$ will depend on how close $w$ and $v$ are to being parallel to $\bf 1$.

 As an analogue of Theorem \ref{LOE}, one has the following corollary.

 \begin{corollary}  \label{cor:soze_perm}

                Let $v= (1,2,\dots, n) $ and $w \in \R^n$ be a unit vector such that $w \cdot {\bf 1 } =0$. Then for any point $x \in \mathbb{R}$
                \begin{equation*}
               \pr ( w_\pi \cdot v =x ) =                 O\left( \frac{1 } {n} \right) .
               \end{equation*}


 \end{corollary}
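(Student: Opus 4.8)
The plan is to deduce the corollary directly from Theorem~\ref{thm:soze_main} by specializing the free parameter $L$ to the point $x$. Given any $x \in \R$, I would set $L := x/n$, so that $Ln = x$. Since $w_\pi \cdot v$ is an ordinary real-valued random variable, the singleton event $\{w_\pi \cdot v = x\}$ is contained in the event $\{\,|w_\pi \cdot v - Ln| \le 1\,\}$, and therefore
\[
\pr\left( w_\pi \cdot v = x \right) \le \pr\left( |w_\pi \cdot v - Ln| \le 1 \right) = O\!\left( \frac{1}{n}\, e^{-c|L|} \right) = O\!\left( \frac{1}{n}\, e^{-c|x|/n} \right).
\]
To finish I would simply use the trivial bound $e^{-c|x|/n} \le 1$, which yields $\pr(w_\pi \cdot v = x) = O(1/n)$, as claimed.

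There is essentially no obstacle to overcome here: all of the analytic content is packaged inside Theorem~\ref{thm:soze_main}, and the corollary is nothing more than its ``point-mass'' instance, obtained by centering the length-one window at $x$ and discarding the exponential decay factor (which only provides extra savings in the regime where $|x|$ is much larger than $n$, and is irrelevant for the stated bound). The one formal point worth noting is that the implied constant remains absolute after the substitution: this is legitimate precisely because the estimate in Theorem~\ref{thm:soze_main} is uniform over all $L \in \R$, so allowing $L$ to depend on $x$ and $n$ does not affect the constant. Finally, $w$ is still a unit vector orthogonal to $\mathbf 1$ (we did not touch it), so the hypotheses of Theorem~\ref{thm:soze_main} are met, and the deduction is complete.
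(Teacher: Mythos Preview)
Your proposal is correct and matches the paper's intended argument: the corollary is stated immediately after Theorem~\ref{thm:soze_main} with no separate proof, so it is precisely the ``point-mass'' specialization you describe (set $L=x/n$, use $\{w_\pi\cdot v=x\}\subseteq\{|w_\pi\cdot v-Ln|\le1\}$, and drop the factor $e^{-c|L|}\le1$).
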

\noindent The bound $O(1/n)$ here is tight, as shown by  $w= (1,-1, 0, \dots, 0)$.  The problem of bounding this point mass was recently considered by Huang, McKinnon, and Satriano \cite{HMS}, who obtained the correct asymptotics and optimal bounds in the case $n = p$ or $n = p+1$, where $p$ is a prime.  Subsequently, Pawlowski \cite{Paw} obtained optimal bounds for all $n$, showing that under the assumptions if the coordinates of $v$ are all distinct and $w \cdot {\bf 1} \neq 0$, then for all $x\in \mathbb{R}$ we have

\begin{equation*}
    \pr[w_\pi \cdot v = x] \le \begin{cases}\frac 1n  \qquad &\text{ if $n$ is odd,}\\
    \frac {1}{n-1} \qquad &\text{ if $n$ is even.}
    \end{cases}
\end{equation*}
In both cases there exist choices of $v,w$ for which equality holds.

 We are going to develop a bound that applies for arbitrary vectors $v$ and $w$.  Our approach will be entirely different from that of Soze, which is tailored to the special case $v= (1,2, \dots, n)$. We say that  $w$ is {\it increasing } if $w_1. \le w_2 \le \dots \le w_n$. Without loss of generality, we can assume that both $w$ and $v$ are increasing and define

 \begin{equation*} \label{delta} \Delta (v)=  \min_{i \neq j} | v_i -v_j | . \end{equation*}
Our main result is the following
\begin{theorem}[Anti-concentration for random permutations]\label{thm:anti_conc_perm}

                Let $w, v \in \R^n$ be increasing with $\Delta (v) >0$.  For any interval $I$ and indices $i_1, i_2$ such that  $i_1+i_2 \le n$ and  $w_{n-i_2}-w_{i_1} > 0$, we have

                $$\pr  (w_\pi \cdot v \in I)= O \left(1 + \frac{|I|}  {\Delta(v) (w_{n-i_2}-w_{i_1})}\right) \frac{1}{(i_1+i_2)\sqrt{\min(i_1,i_2)}}. $$

\end{theorem}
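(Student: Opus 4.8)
The plan is to reduce the problem to an Erdős–Littlewood–Offord-type statement by exposing, inside the random permutation, a hidden Rademacher-like structure of size roughly $i_1 + i_2$. First I would condition on a carefully chosen random matching. Concretely, fix $i_1$ of the smallest entries of $w$, namely $w_1, \dots, w_{i_1}$, and $i_2$ of the largest, $w_{n-i_2+1}, \dots, w_n$, and think of generating $\pi$ by first deciding which positions of $v$ receive an element from the ``bottom block,'' which receive one from the ``top block,'' and which receive a ``middle'' element, and only afterwards deciding the fine assignment within each block. I would couple the bottom and top blocks by pairing them into $m := \min(i_1, i_2)$ disjoint pairs $\{w_{a_k}, w_{b_k}\}$ with $w_{a_k} \le w_{i_1} < w_{n-i_2} \le w_{b_k}$, together with leftover elements; then, conditionally on everything except the choice, within each pair $k$, of which of the two values $w_{a_k}, w_{b_k}$ goes to which of its two assigned $v$-slots (two slots whose $v$-values differ by at least $\Delta(v)$), the contribution of pair $k$ to $w_\pi \cdot v$ is $\pm \tfrac12 (w_{b_k} - w_{a_k})(v_{s} - v_{t})$ plus a constant, and these $m$ sign choices are independent and uniform.

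The second step is then a direct application of Theorem~\ref{LOE} (the Erdős–Littlewood–Offord inequality) to this conditional sum: each of the $m$ independent Rademacher terms has absolute value at least $\tfrac12 \Delta(v)(w_{n-i_2} - w_{i_1})$, so after rescaling, the conditional concentration probability of $w_\pi \cdot v$ in any interval $I$ is $O\left(1 + \tfrac{|I|}{\Delta(v)(w_{n-i_2}-w_{i_1})}\right)\tfrac{1}{\sqrt m}$. Averaging over the conditioning gives the same bound unconditionally, which already yields the factor $\tfrac{1}{\sqrt{\min(i_1,i_2)}}$.

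To gain the additional factor $\tfrac{1}{i_1+i_2}$, I would not fix the bottom/top blocks in advance but average over the random choice of which $v$-coordinates are hit by bottom-block versus top-block elements. The point is that the number of ways to interleave roughly $i_1 + i_2$ marked slots among $n$ positions, combined with a translation argument — shifting which contiguous window of slots is used shifts the mean of the conditional sum by a controlled amount comparable to $\Delta(v)(w_{n-i_2}-w_{i_1})$ per unit — spreads the distribution of $w_\pi \cdot v$ across $\Omega(i_1 + i_2)$ essentially disjoint translates of any fixed interval, so no single interval can capture more than a $\tfrac{1}{i_1+i_2}$ fraction of this spreading. Combining the two mechanisms multiplicatively gives the claimed bound; the bookkeeping that the two gains are genuinely independent (so they multiply rather than merely add) is where one must be careful.

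The main obstacle I anticipate is precisely this last combination: making rigorous the claim that the ``within-pair sign'' randomness (an ELO gain of $1/\sqrt m$) and the ``which slots are used'' randomness (a translation/spreading gain of $1/(i_1+i_2)$) can be decoupled and composed. A clean way to do this is to expose the randomness in the right order — first the macroscopic assignment pattern, then the $m$ microscopic signs — and apply ELO conditionally on the pattern, then handle the pattern-averaging by a separate combinatorial/anti-concentration argument on a auxiliary one-dimensional walk; one has to check that the conditional ELO bound is uniform enough in the pattern that the outer average does not wash it out, and that the translates produced by varying the pattern really are spaced by $\Omega(\Delta(v)(w_{n-i_2}-w_{i_1}))$ rather than collapsing. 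Verifying the edge cases (when $i_1$ and $i_2$ are very unbalanced, or when $w_{n-i_2} - w_{i_1}$ is barely positive) will require the ``$1+$'' term in the bound and should fall out of the same analysis.
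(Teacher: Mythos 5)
Your opening move is sound: generating the permutation by first fixing the assignment pattern and then orienting $m := \min(i_1,i_2)$ disjoint bottom--top pairs by independent fair coins does expose a genuine $m$-term Rademacher sum, each term of magnitude at least $\tfrac12\Delta(v)(w_{n-i_2}-w_{i_1})$, and applying Theorem~\ref{LOE} conditionally yields the factor $1/\sqrt{\min(i_1,i_2)}$. But the second half of the argument --- extracting the remaining factor $1/(i_1+i_2)$ from pattern-averaging --- is where the proposal breaks, and not merely in bookkeeping.

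First, the ``essentially disjoint translates'' claim is false. Conditioned on the pattern, $w_\pi\cdot v$ is (a constant plus) a Rademacher sum of $m$ terms each of size at least $\tfrac12\Delta(v)(w_{n-i_2}-w_{i_1})$, so its conditional support already spans an interval of length $\gtrsim m\,\Delta(v)(w_{n-i_2}-w_{i_1})$. Shifting the conditional mean by ``$\Delta(v)(w_{n-i_2}-w_{i_1})$ per unit'' therefore produces translates that overlap almost completely rather than being disjoint; one cannot conclude that only a $1/(i_1+i_2)$ fraction of patterns contribute. Second, and more fundamentally, the factorization into ``sign randomness'' and ``pattern randomness'' is intrinsically lossy: anti-concentration of a sum of two independent pieces is governed by the worse of the two, not the product. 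A concrete instance makes the gap visible: take $i_1=i_2=m$, all bottom $w$'s equal to $0$ and all top $w$'s equal to $1$, and $v=(1,\dots,2m)$. Here the pattern-conditional mean of the sum is literally constant (the same for every pattern), so pattern-averaging contributes nothing in your framework, and your bound stalls at $O(1/\sqrt{m})$; the theorem, however, asserts $O(1/(m\sqrt{m}))$, which in this example is exactly what Lemma~\ref{randomset2} gives for the uniform random $m$-subset sum $\sum_{j\in U}v_j$. The full strength of the bound comes precisely from treating the pattern and the orientations jointly as a single uniform random $|A|$-subset of $\sigma(A)\cup\sigma(B)$, not from multiplying two separate concentration estimates.

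The paper's proof is structured so as to avoid this splitting entirely: it exposes the randomness up to the choice of which $|A|$ elements of the pooled index set $X=\sigma(A)\cup\sigma(B)$ become $\sigma(A)$, endows $|A|$-subsets of $X$ with the dominance order, invokes Stanley's theorem that this poset has the Sperner property so its width is the maximal level-set size (bounded by Lemma~\ref{randomset2} as $O\bigl(\binom{|X|}{|A|}\beta(|A|,|A|+|B|)\bigr)$), and then uses Dilworth together with the monotone $\geq\Delta(t_2-t_1)$ step size along covering relations to bound the number of subsets landing in $I$. Your Rademacher reduction is a valid and appealing alternative start, but to recover the $1/(i_1+i_2)$ factor you would need to replace the spreading heuristic by something that couples the pattern and sign choices into a single subset-sum anti-concentration statement, at which point you are effectively reproving Lemma~\ref{randomset2} and the poset machinery.
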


This gives a strong bound if by some careful choice we can make $i_1$, $i_2$, and $w_{n-i_2}-w_{i_1}$ large simultaneously.  In the case when there are repetition among the $v_i$, or $\Delta (v)$ is small, one can still get  a bound by conditioning on a subset which has large $\Delta$.  For instance, (by choosing $i_i = i_2 = \varepsilon n /3$ in what follows) we obtain the following improvement of Corollary \ref{cor:soze_perm}

\begin{corollary}
Assume $v$ has all distinct coordinates and that no $w_i$ is repeated more than $(1-\epsilon)n$ times, for some  $\epsilon >0$.  Then for any $x \in \mathbb{R},$

$$\pr (w_{\pi} \cdot v =x ) = O \left( \frac{1} { \epsilon \ n^{3/2} } \right).$$
\end{corollary}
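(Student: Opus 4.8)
The plan is to derive the corollary directly from Theorem~\ref{thm:anti_conc_perm}, applied to the degenerate interval $I = \{x\}$: then $|I| = 0$, so the prefactor $1 + |I|/\big(\Delta(v)(w_{n-i_2}-w_{i_1})\big)$ collapses to $1$. Before invoking the theorem I would make the standard reduction to the case where $w$ and $v$ are both increasing — precomposing and postcomposing $\pi$ with fixed permutations does not change the law of $w_\pi\cdot v$, the hypothesis that no value of $w$ is repeated more than $(1-\epsilon)n$ times is invariant under reordering the coordinates of $w$, and the coordinates of $v$ being distinct is invariant under reordering them and moreover forces $\Delta(v) > 0$. After this reduction it suffices to exhibit indices $i_1, i_2 \ge 1$ with $i_1 + i_2 \le n$ and $w_{n-i_2} > w_{i_1}$, and then to choose them so as to optimize the resulting bound.

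The one substantive point is that the non-repetition hypothesis furnishes such indices with $\min(i_1,i_2)$ of order $\epsilon n$. Since $w$ is increasing and no value occupies more than $(1-\epsilon)n$ of its coordinates, $w$ cannot be constant on any block of more than $(1-\epsilon)n$ consecutive coordinates. Taking $i_1 = i_2 = \lfloor \epsilon n/3\rfloor$, the coordinates $i_1, i_1+1, \dots, n-i_2$ form a block of $n - i_1 - i_2 + 1 > (1-\tfrac{2\epsilon}{3})n > (1-\epsilon)n$ entries, hence cannot be constant, so $w_{n-i_2} > w_{i_1}$; and $i_1 + i_2 \le \tfrac{2}{3}\epsilon n \le n$. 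Theorem~\ref{thm:anti_conc_perm} with $I = \{x\}$ and this split then gives
\[
\pr(w_\pi\cdot v = x) \;=\; O\!\left(\frac{1}{(i_1+i_2)\sqrt{\min(i_1,i_2)}}\right) \;=\; O\!\left(\frac{1}{\epsilon\,n^{3/2}}\right),
\]
which is the assertion. (For the sharpest dependence on $\epsilon$, the same non-constancy principle applied to the middle third of the coordinates yields a $k$ with $w_k < w_{k+1}$ and $\epsilon n/3 \le k \le n-\epsilon n/3$; then $(i_1,i_2) = (k,\,n-k-1)$ has $i_1 + i_2 = n-1$ and $\min(i_1,i_2) = \Omega(\epsilon n)$, and the bound improves to $O\big(1/(\sqrt\epsilon\,n^{3/2})\big)$.)

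I do not expect a genuine obstacle: the statement is a direct corollary of Theorem~\ref{thm:anti_conc_perm}, and the only places that need a little attention are the rearrangement reduction, the elementary pigeonhole estimate bounding the length of constant runs in $w$ (where the constant $\epsilon/3$ is tuned so that the relevant block still has more than $(1-\epsilon)n$ coordinates), and the routine treatment of small $n$ and of the floor functions, which can be absorbed into the implied constant.
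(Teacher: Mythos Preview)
Your approach is precisely the one the paper indicates: its entire proof is the parenthetical ``by choosing $i_1 = i_2 = \varepsilon n/3$'' preceding the statement, so your reduction and choice of indices match theirs. One arithmetic slip to flag: with $i_1 = i_2 = \lfloor \varepsilon n/3\rfloor$ one has $(i_1+i_2)\sqrt{\min(i_1,i_2)} \asymp \varepsilon^{3/2} n^{3/2}$, so that choice only yields $O(\varepsilon^{-3/2} n^{-3/2})$ --- it is your parenthetical alternative (taking $i_1+i_2 = n-1$ with $\min(i_1,i_2) = \Omega(\varepsilon n)$) that actually delivers the stated $O(\varepsilon^{-1} n^{-3/2})$, indeed the sharper $O(\varepsilon^{-1/2} n^{-3/2})$.
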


Through slight additional work (see proof in Section 3), we also obtain the following
\begin{corollary}\label{thm:main_anti_conc}  
                Let $w, v \in \R^n$ with $\Delta(v) > 0$. Let $\overline w = n\inv\sum w_i$ and $\sigma^2 = \sum (w_i-\overline w)^2$. Then for any interval $I$,
                \begin{equation*}
                \pr  ( w_\pi \cdot v \in I) = O\left( \frac{ |I|}{n\sigma\Delta(v)}\sqrt{\log n}+ \frac 1n\right).
                \end{equation*}
\end{corollary}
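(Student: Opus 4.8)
The plan is to derive the corollary from Theorem~\ref{thm:anti_conc_perm} by a careful choice of the indices $i_1,i_2$, with a dyadic pigeonhole supplying the $\sqrt{\log n}$. First I would make the usual reductions: permuting the coordinates of $w$ (or of $v$) does not change the law of $w_\pi\cdot v$, and replacing $w$ by $w-\bar w\mathbf 1$ only shifts $w_\pi\cdot v$ by the constant $\bar w\sum_i v_i$, so we may assume $w$ and $v$ are increasing and $\bar w=0$, whence $\sigma^2=\sum_i w_i^2$; we may also assume $\sigma>0$ and that $n$ exceeds a fixed constant. Put $m=\lfloor n/2\rfloor$. Since the variance is the least second moment, $\sigma^2\le\sum_i(w_i-w_m)^2$, and decomposing the index set into the dyadic blocks $[2^k,2^{k+1})$ at the bottom and $(n-2^{k+1},n-2^k]$ at the top and using monotonicity of $w$ on each block gives
\[
\sigma^2\ \le\ (w_n-w_m)^2\ +\ \sum_{k\ge 0}2^k\bigl(w_m-w_{2^k}\bigr)^2\ +\ \sum_{k\ge 0}2^k\bigl(w_{n-2^k}-w_m\bigr)^2 ,
\]
where out-of-range terms, and terms with a negative displayed difference, are discarded. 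The right side has $O(\log n)$ nonzero terms, so for a large absolute constant $C$ either (A) some term of one of the two sums is $\ge\sigma^2/(C\log n)$, or (B) all of them are $<\sigma^2/(C\log n)$, which forces $(w_n-w_m)^2\ge\sigma^2/2$.

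In case (A), say $2^k(w_m-w_{2^k})^2\ge\sigma^2/(C\log n)$ (the term with $w_{n-2^k}-w_m$ is symmetric), so $w_m-w_{2^k}\ge\sigma/\sqrt{C\,2^k\log n}>0$. I would apply Theorem~\ref{thm:anti_conc_perm} with $i_1=2^k$ and $i_2=\lceil n/2\rceil$; then $i_1+i_2\le n$, $\min(i_1,i_2)=2^k$, $i_1+i_2\asymp n$, and $w_{n-i_2}-w_{i_1}=w_m-w_{2^k}>0$, so the theorem gives
\[
Q(w_\pi\cdot v,|I|)\ \lesssim\ \Bigl(1+\frac{|I|}{\Delta(v)\,(w_m-w_{2^k})}\Bigr)\frac{1}{n\sqrt{2^k}}\ \lesssim\ \frac1n+\frac{|I|\sqrt{\log n}}{n\,\sigma\,\Delta(v)} ,
\]
as required (for the sibling term one takes instead $i_1=m$, $i_2=2^k$).

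In case (B) the $k=0$ terms give $w_m-w_1<\sigma/\sqrt{C\log n}$ and $w_{n-1}-w_m<\sigma/\sqrt{C\log n}$, so $w_1,\dots,w_{n-1}$ all lie in an interval of length $<2\sigma/\sqrt{C\log n}$, while $w_n-w_m\ge\sigma/\sqrt2$. Theorem~\ref{thm:anti_conc_perm} applied to $(w,v)$ is useless here, because its gap $w_{n-i_2}-w_{i_1}$ can never involve the largest coordinate $w_n$. The remedy is to apply the theorem to the increasing vectors $\tilde w=(-w_n,\dots,-w_1)$ and $\tilde v=(-v_n,\dots,-v_1)$: one checks that $\tilde w_\pi\cdot\tilde v$ has the same law as $w_\pi\cdot v$ and that $\Delta(\tilde v)=\Delta(v)$, while the relevant gap is now $w_{n+1-i_1}-w_{i_2+1}$, which for $i_1=1$ reaches $w_n$. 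Taking $i_1=1$, $i_2=\lceil n/2\rceil$, the gap equals $w_n-w_{\lceil n/2\rceil+1}\ge(w_n-w_m)-(w_{n-1}-w_m)>\sigma/\sqrt2-\sigma/\sqrt{C\log n}\ge\sigma/4$ (for $C$ large and $n\ge 4$), and $\min(i_1,i_2)=1$, $i_1+i_2\asymp n$, so the theorem yields $Q(w_\pi\cdot v,|I|)\lesssim\frac1n+\frac{|I|}{n\sigma\Delta(v)}$, even without the logarithmic factor.

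The hard part is the design of the inputs rather than any single estimate: one must (i) take the index pair asymmetric — one index of order $n/2$, the other small — so that the factor $(i_1+i_2)\sqrt{\min(i_1,i_2)}$ already supplies $\tfrac1n$ while the small index is free to sit at a scale on which $w$ spreads out by $\gtrsim\sigma/\sqrt{\log n}$, and (ii) recognize that Theorem~\ref{thm:anti_conc_perm} must be used in both orientations, since applied to $(w,v)$ it is blind to $w_n$ and applied to $(-w,-v)$ it is blind to $w_1$, so the two together see all of $w$. The $\sqrt{\log n}$ is precisely the cost of the $O(\log n)$-fold pigeonhole, and checking the displayed bound on $\sigma^2$ together with the various index inequalities is routine.
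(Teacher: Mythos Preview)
Your argument is correct and follows the same overall plan as the paper: reduce to $\bar w=0$, then run a pigeonhole over $O(\log n)$ scales to locate indices $i_1,i_2$ with $i_1+i_2\asymp n$ and $(w_{n-i_2}-w_{i_1})\sqrt{\min(i_1,i_2)}\gtrsim\sigma/\sqrt{\log n}$, and feed these into Theorem~\ref{thm:anti_conc_perm}. The paper carries out the pigeonhole per index (testing whether $w_k^2>\sigma^2/(Ck\log n)$ and summing the harmonic series to a contradiction) and organizes the cases around the sign-change index $j$, whereas you group into dyadic blocks around the median $w_m$; these are equivalent bookkeeping choices. Your Case~(B), applying the theorem to the reversed pair $(\tilde w,\tilde v)$ so as to reach the extreme coordinate $w_n$ (which the gap $w_{n-i_2}-w_{i_1}$ with $i_2\ge 1$ can never touch directly), is a genuine clarification that the paper's contradiction argument handles only implicitly; your treatment of this edge case is cleaner.
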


The main tools in the proof of Theorem \ref{thm:anti_conc_perm} are the following, which are of independent interest

\begin{lemma}    [Random subset sum with replacement] \label{randomset1}
                Let $A:= \{ a_1, \dots, a_n \}$ be  a set of distinct real numbers. Set $S$ be the sum of $k$ elements from $A$, chosen uniformly randomly with replacement.  Then for any $x \in \R$,
                \begin{equation*}
                    \pr (S =x ) = O\left( \frac {1}{ n \sqrt k }\right) .
                \end{equation*}
\end{lemma}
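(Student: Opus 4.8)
The bound $\pr(S=x)\le 1/n$ is immediate: conditioning on $a_{X_2},\dots,a_{X_k}$, the event $S=x$ forces $a_{X_1}$ to take a prescribed value, which (the $a_i$ being distinct) has conditional probability at most $1/n$. So the entire point is the extra factor $1/\sqrt k$, a local–limit–type gain, and my plan is to extract it via characteristic functions. Put $f(t)=\tfrac1n\sum_{i=1}^n e^{\mathrm i a_i t}$; then $S$ has characteristic function $f^k$ and $\pr(S=x)\le\limsup_{T\to\infty}\tfrac1{2T}\int_{-T}^T |f(t)|^k\,dt$. I would first reduce to the lattice case: $\sup_x\pr(S=x)$ depends only on which integer relations $\sum_i c_i a_i=0$ with $\sum_i|c_i|\le 2k$ hold, and the set of real vectors realizing a fixed such pattern together with all $a_i\ne a_j$ is a nonempty relatively open subset of a rational subspace, hence contains an integer point; so we may assume $a_1,\dots,a_n$ are distinct integers and, after dividing out the gcd of their pairwise differences, $|f|=1$ on $[-\pi,\pi]$ only at $0$ and $\pr(S=x)\le\tfrac1{2\pi}\int_{-\pi}^\pi|f(t)|^k\,dt$.

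To estimate this integral, use $\log|f|\le\tfrac12(|f|^2-1)$ and the identity $1-|f(t)|^2=\tfrac4{n^2}\sum_{i<j}\sin^2\!\big(\tfrac{(a_i-a_j)t}{2}\big)$, which give $|f(t)|^k\le\exp\!\big(-\tfrac{2k}{n^2}\sum_{i<j}\sin^2(\tfrac{(a_i-a_j)t}{2})\big)$. On a fixed neighborhood of $0$ the exponent is $\gtrsim k\sigma^2 t^2$, where $\sigma^2=\tfrac1n\sum_i(a_i-\bar a)^2$, and since $n$ distinct integers satisfy $\sigma^2\ge n^2/24$, that neighborhood contributes $\lesssim\int_{\R}e^{-ck\sigma^2 t^2}\,dt\lesssim\tfrac1{\sigma\sqrt k}\lesssim\tfrac1{n\sqrt k}$, precisely the target. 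The crux is to bound the rest of $[-\pi,\pi]$, i.e.\ to control the "secondary" points where $|f|$ is close to $1$. When the $a_i$ are spread out (no gap much larger than $(a_n-a_1)/n$) I expect to show no such points exist, so the estimate above already yields $\pr(S=x)=O(1/(n\sqrt k))$; and for arbitrary $A$ the same estimate should give $\pr(S=x)=O(1/(\sqrt n\,k))$, which is $O(1/(n\sqrt k))$ as soon as $k\ge c_0 n$. (The extremal profile for $\int|f|^k$ is a long arithmetic‑progression cluster plus one far‑away point, where $|f|$ comes close to $1$ near every $2\pi m/N$ but the peaks decay fast enough.)

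This leaves the regime "clustered $A$ with $k\lesssim n$," where genuine secondary near‑resonances make the magnitude bound lossy, so I would not use Fourier there. Instead, split $A$ at its large gaps into clusters $A=C_1\sqcup\cdots\sqcup C_p$, condition on the vector $(b_1,\dots,b_p)$ recording how many of the $k$ draws land in each $C_t$ — which, the between‑cluster gaps being enormous, is forced by $x$ — and from $S=\sum_t S^{C_t}_{b_t}$ bound $\pr(S=x)$ by the multinomial probability of that forced vector times $\min_t\max_y\pr(S^{C_t}_{b_t}=y)$, the last factor supplied by induction on $n$ since each $|C_t|<n$. The main obstacle is closing the induction with a \emph{universal} constant: isolating a cluster makes the remaining set denser, so a naive induction loses a factor $1+O(1/n)$ per step; this must be recovered from the facts that the typical draws avoid an isolated cluster (so the dominant term already equals $1/(n\sqrt k)$) and that a "spread‑out" draw–count vector has small probability, so one of the two factors above is always small enough. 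Alternatively one can route this case through the event that the $k$ draws are pairwise distinct (of probability bounded below for $k\le n/2$, and handled by peeling otherwise) and invoke the without‑replacement analogue of the lemma.
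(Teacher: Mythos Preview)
Your Fourier outline is a legitimate strategy and correctly isolates the main term: after the (valid) reduction to distinct integers with coprime difference set, the neighborhood of $t=0$ contributes $O(1/(n\sqrt{k}))$ to $\tfrac{1}{2\pi}\int_{-\pi}^{\pi}|f|^{k}$, since distinct integers force $\sigma^{2}\gtrsim n^{2}$. But the remainder is not a proof. The asserted bound $O(1/(\sqrt{n}\,k))$ for arbitrary $A$ is never justified, and it is exactly the hard case: for $A=\{0,N,2N,\ldots,(n-2)N\}\cup\{1\}$ there are $\Theta(N)$ secondary peaks with $|f|\ge 1-2/n$, so the pointwise bound $|f|^{k}\le\exp(-\tfrac{2k}{n^{2}}\sum\sin^{2}(\cdot))$ gives nothing there when $k\ll n$, and summing their contributions is essentially equivalent to the original problem. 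Your fallback cluster--induction on $n$ has, as you yourself note, an unclosed constant; and the alternative of routing through the without--replacement lemma is circular in this paper, since Lemma~\ref{randomset2} is deduced \emph{from} Lemma~\ref{randomset1} (via the birthday paradox in the range $k\le n^{0.49}$), not conversely.

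The paper avoids the secondary--peak analysis altogether by a short induction on $k$ using Rogozin's inequality (Theorem~\ref{Rog}). Let $I$ be a shortest interval containing $\lceil n/2\rceil$ of the $a_i$, so that each summand satisfies $Q(X_j,|I|)\le 2/3$. Conditioning on the last draw gives $\pr(X=x)=n^{-1}\sum_{i}\pr(X'=x-a_i)$ with $X'=X_1+\cdots+X_{k-1}$. The terms with $a_i\in I$ together are at most $n^{-1}\pr(X'\in x-I)\le C_0\,n^{-1}\sqrt{3/(k-1)}$ by Rogozin, while the at most $n/2$ terms with $a_i\notin I$ are each $\le C/(n\sqrt{k-1})$ by the induction hypothesis; choosing $C$ a fixed multiple of $C_0$ closes the induction. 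Packaging the ``clustered half'' of $A$ as a single interval event handled by Rogozin is precisely the missing idea that replaces your secondary--peak and cluster--decomposition arguments.
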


\begin{lemma}   [Random subset sum without replacement]

\label{randomset2} Let $\{ a_1, \dots, a_n \}$ be  a set of distinct real numbers. Set $S$ be the sum of $k$ elements from $A$, chosen uniformly randomly without replacement.  Then for any $x \in \R$,
\begin{equation*}
\pr (X =x ) = O\left( \frac {1}{ n \sqrt {\min(k,n-k)} }\right) .
\end{equation*}
\end{lemma}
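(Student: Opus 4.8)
The plan is to reduce the without-replacement statement to the with-replacement statement of Lemma \ref{randomset1}. The natural idea is a coupling: choosing $k$ elements with replacement and conditioning on the event that all $k$ chosen indices are distinct yields exactly the uniform without-replacement distribution. So if $S_{\mathrm{wr}}$ denotes the with-replacement sum and $S$ the without-replacement sum, then $\pr(S = x) = \pr(S_{\mathrm{wr}} = x \mid \text{all indices distinct})$. The birthday-type estimate gives $\pr(\text{all indices distinct}) = \prod_{j=0}^{k-1}(1 - j/n)$, which is bounded below by a positive constant as long as $k \le c\sqrt n$. Hence for $k = O(\sqrt n)$ we immediately get $\pr(S = x) = O(\pr(S_{\mathrm{wr}} = x)) = O(1/(n\sqrt k))$, and since in this regime $\min(k, n-k) = k$, this is exactly the claimed bound.

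For larger $k$ the coupling degrades, so I would handle the range $k \gtrsim \sqrt n$ separately by exploiting more structure. First, by the symmetry $S \leftrightarrow (\sum a_i) - S$ (the complement of a uniform $k$-subset is a uniform $(n-k)$-subset), we may assume $k \le n/2$, so that $\min(k, n-k) = k$ throughout. Then I would peel off a random sub-selection: condition on which $\lfloor c\sqrt n\rfloor$ of the $k$ chosen elements occupy some fixed set of "slots," and observe that conditionally the remaining choice is still uniform without replacement from the surviving pool, but now on a sub-selection of size $\Theta(\sqrt n)$ from a set of size $\ge n/2$. Applying the already-established small-$k$ case to this conditional distribution gives $\pr(S = x \mid \text{rest}) = O(1/(n \cdot n^{1/4}))$ pointwise; but this only yields $O(n^{-5/4})$, which is weaker than the target $O(1/(n\sqrt k)) = O(n^{-1})$ when $k \asymp n$ — wait, $n^{-5/4}$ is actually \emph{stronger} than $n^{-1}$, so in fact this suffices. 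Let me restate: the target bound is $O(1/(n\sqrt{\min(k,n-k)}))$, which for $k\asymp n$ is $O(n^{-3/2})$; the sub-selection argument as sketched gives only $O(n^{-5/4})$, which is too weak. So I need the conditional sub-selection to have size $\Theta(k)$, not $\Theta(\sqrt n)$ — but then I cannot apply the small-$k$ case. The correct fix is instead to directly reprove Lemma \ref{randomset1}'s argument in the without-replacement setting, using a Fourier/characteristic-function approach or a combinatorial switching argument, rather than trying to cite it as a black box.

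So the real plan for the main range is: use the same technique that proves Lemma \ref{randomset1} — presumably a characteristic function estimate $\pr(S = x) \le \frac{1}{2\pi}\int_{-\pi/\eta}^{\pi/\eta}|\E e^{it S/\eta}|\,dt$ after a suitable discretization, combined with a bound on $|\E e^{itS}|$ coming from the fact that $S$ is a sum of $k$ "spread out" terms. Without replacement, the terms are not independent, but one can still get a clean bound on $|\E e^{itS}|$ by a martingale/exchangeability argument or by comparing to the independent model via a negative-association inequality. The key quantitative input is that, because the $a_i$ are distinct, a random single term has characteristic function bounded away from $1$ in modulus on a macroscopic range of $t$, and summing $\Theta(k)$ such contributions gives exponential decay $e^{-\Omega(k t^2)}$ on that range; integrating yields the $1/\sqrt k$ saving, and the discretization scale $\eta \asymp \Delta(\{a_i\})$ together with the range of $t$ gives the $1/n$ factor. \textbf{The main obstacle} is controlling $|\E e^{itS}|$ in the without-replacement model: the terms are dependent, so one cannot simply multiply single-term characteristic functions, and one must either invoke negative association / Hoeffding-type comparison between sampling with and without replacement, or run a direct second-moment argument on the number of "resonant" index pairs. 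I expect this dependence issue, and making the constant uniform across all $k \le n/2$, to be where essentially all the work lies.
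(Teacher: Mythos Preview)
Your small-$k$ reduction via the birthday coupling is exactly the paper's Case~1 (they take the threshold $k\le n^{0.49}$, but the mechanism is identical). The divergence is entirely in the large-$k$ range.

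There your sketch has a real gap. The line ``discretization scale $\eta\asymp\Delta(\{a_i\})$ \dots\ gives the $1/n$ factor'' cannot be correct: for arbitrary distinct reals the minimum gap $\Delta$ bears no relation to $n$ (one can shrink $\Delta$ arbitrarily without changing any point probability), so the $1/n$ must come from somewhere else. In fact even Lemma~\ref{randomset1} is not proved by Fourier here; the $1/n$ there comes from Rogozin's inequality applied with an interval capturing half the points, and that argument genuinely uses independence. Negative association for sampling without replacement does not yield product bounds on $|\E e^{itS}|$, so the dependence obstacle you flag is not merely technical---you have not proposed a mechanism that gets past it, and the sub-selection idea you correctly discarded was the only concrete attempt. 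The paper's route is entirely different and is the missing idea: it invokes a \emph{sparse inverse Erd\H os--Littlewood--Offord theorem}. Assuming $\pr(S=x)\ge B/(n\sqrt k)$, one passes to a Bernoulli model with $\pr(x_i=1)=k/n$ (conditioning on exactly $k$ successes costs only a $k^{-1/2}$ factor), so $\rho\gtrsim (nk)^{-1}$; the inverse theorem then forces $0.9n$ of the $a_i$ into a GAP $Q$ of rank $r$ with $|Q|\lesssim \rho^{-1}k^{-r/2}$. The trick is to first translate $b_i=a_i+N$ with $N$ enormous (say $N\ge 2^n$), which rules out rank $r=1$; with $r\ge2$ one gets $|Q|\lesssim n/B<0.9n$ for $B$ large, a contradiction.
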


\noindent \textbf{Remark:}
We originally proved the main result of this note  around 2019--2020, as a tool to study random polynomials. The application we had in mind, however, has turned out to be more technical than anticipated, so we did not publish our findings at the time. Since then, we found out about recent developments \cite{HMS, Paw} and have decided to publish this first as it is of independent interest.

\section {Proofs of the main lemmas}
We make use of the following  result of Rogozin \cite{Rog}. Let $Y$ be a real random variable; recall the notion of concentration function

$$Q(Y, t) := \max _y \pr (y \le Y \le y +t)  . $$ 

\begin{theorem} [Rogozin]  \label{Rog}  There is a constant $C_0 >0$ such that the following holds. Let $X_1, \dots, X_k$ be independent random variables and set $X= \sum_I^k X_i$. Then
$$Q (X, t)  \le C_0 (\sum_{i=1} ^k (1- Q(X_i, t) )  ) ^{-1/2} . $$
\end{theorem}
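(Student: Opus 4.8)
This is the Kolmogorov--Rogozin inequality, and I would prove it by Fourier analysis. Write $\phi_j(\theta)=\E e^{i\theta X_j}$ for the characteristic function of $X_j$, so that $X=\sum_j X_j$ has characteristic function $\prod_j\phi_j$ and $|\phi_X(\theta)|=\prod_j|\phi_j(\theta)|$. Put $p_j:=1-Q(X_j,t)$ and $B:=\sum_{j=1}^k p_j$; the goal is $Q(X,t)\le C_0 B^{-1/2}$. The first step is a standard form of Esseen's concentration inequality \cite{Esseen}: there is an absolute constant $C_1$ with
$$Q(X,t)\ \le\ C_1\,t\int_{-2/t}^{\,2/t}|\phi_X(\theta)|\,d\theta ,$$
so it suffices to prove $\int_{-2/t}^{2/t}\prod_j|\phi_j(\theta)|\,d\theta=O\big(t^{-1}B^{-1/2}\big)$.

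Next I would record the two elementary inputs. Passing to squared moduli, $|\phi_j(\theta)|^2=\E\cos(\theta Z_j)$ where $Z_j:=X_j-X_j'$ is a symmetrization of $X_j$, and conditioning on $X_j'$ shows $\pr(|Z_j|\ge t/2)\ge 1-Q(X_j,t)=p_j$. Using $\log x\le\tfrac12(x^2-1)$ on $(0,1]$ gives the pointwise bound
$$\prod_j|\phi_j(\theta)|\ \le\ \exp\!\Big(-\tfrac12\sum_j\big(1-|\phi_j(\theta)|^2\big)\Big)=\exp\!\big(-\tfrac12 G(\theta)\big),\qquad G(\theta):=\sum_j\E\big[1-\cos(\theta Z_j)\big]\ge 0 .$$
Integrating the elementary inequality $1-\tfrac{\sin u}{u}\ge c\min(u^2,1)$ against the law of $Z_j$ yields $\int_0^{2/t}\E[1-\cos(\theta Z_j)]\,d\theta\gtrsim t^{-1}\pr(|Z_j|\ge t/2)\gtrsim t^{-1}p_j$, whence $\int_{-2/t}^{2/t}G(\theta)\,d\theta\gtrsim t^{-1}B$; that is, $G$ has average of order $B$ on the Esseen window.

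The main obstacle is that this average lower bound on $G$ does \emph{not} by itself bound $\int e^{-G/2}$, since a spiky $G$ could be huge on a tiny set and vanish elsewhere. One must exploit that $G$ is a \emph{sum} of the nonnegative functions $\theta\mapsto\E[1-\cos\theta Z_j]$: $G(\theta)$ is small only when every $\E[1-\cos\theta Z_j]$ is small, which forces $\theta Z_j$ to be near $2\pi\Z$ for the bulk of the mass of each $Z_j$; since $|Z_j|\gtrsim t$ with probability at least $p_j$, such $\theta$ lie in a union of short intervals spaced $\sim 1/R$ apart around a local scale $R$, and near each of them $G$ grows quadratically with coefficient $\sim R^2 B$ (after truncating each $Z_j$ at height $\sim 1/|\theta|$ to handle heavy tails). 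Consequently $e^{-G/2}$ is Gaussian-localised near these points, each neighbourhood contributing $\sim (R\sqrt B)^{-1}$ while $\sim R/t$ of them lie in $[-2/t,2/t]$, so the scale $R$ cancels and $\int_{-2/t}^{2/t}e^{-G/2}\lesssim t^{-1}B^{-1/2}$. Making this localisation argument rigorous and uniform over arbitrary $X_j$ -- especially when the $Z_j$ have tails spread over many scales $R$ -- is the delicate part; the standard device is to first reduce to the case in which each $X_j$ is replaced by $\varepsilon_j U_j$, with $\varepsilon_j\sim\mathrm{Bernoulli}(p_j)$ and $U_j$ an explicit two-valued (or uniform) random variable extracted from the interval witnessing $Q(X_j,t)=1-p_j$, for which the characteristic functions are completely explicit and the bookkeeping becomes routine.
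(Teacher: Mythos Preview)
The paper does not prove this statement at all: Rogozin's inequality is quoted from \cite{Rog} and used as a black box in the proof of Lemma~\ref{randomset1}. So there is nothing to compare against on the paper's side.

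That said, your sketch is the standard Esseen route and is correct in outline, but you have made the heart of the argument harder than it is. You worry that only an \emph{average} lower bound on $G(\theta)=\sum_j(1-|\phi_j(\theta)|^2)$ is available over the Esseen window, and then launch into a multi-scale localisation to control $\int e^{-G/2}$. In fact a \emph{pointwise} quadratic lower bound holds on the whole window, and this makes the rest a one-line Gaussian integral. From $1-\cos u\ge c\,\min(u^2,1)$ one gets, on the event $\{|Z_j|\ge t/2\}$ and for $|\theta|\le 2/t$,
\[
1-\cos(\theta Z_j)\ \ge\ c\,\min\!\big(\theta^2 Z_j^2,1\big)\ \ge\ c\,\min\!\big(\tfrac{\theta^2 t^2}{4},1\big)\ =\ \tfrac{c}{4}\,\theta^2 t^2,
\]
so $1-|\phi_j(\theta)|^2\ge \tfrac{c}{4}\,\theta^2 t^2\,\pr(|Z_j|\ge t/2)\ge \tfrac{c}{4}\,\theta^2 t^2\,p_j$. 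Summing gives $G(\theta)\ge c'\,t^2 B\,\theta^2$ for all $|\theta|\le 2/t$, and hence
\[
\int_{-2/t}^{2/t}\prod_j|\phi_j(\theta)|\,d\theta\ \le\ \int_{-\infty}^{\infty}\exp\!\big(-\tfrac{c'}{2}\,t^2 B\,\theta^2\big)\,d\theta\ =\ O\!\big(t^{-1}B^{-1/2}\big),
\]
which combined with Esseen's smoothing inequality yields $Q(X,t)=O(B^{-1/2})$. Your reduction to Bernoulli-type summands and the ``spiky $G$'' discussion are unnecessary here; the only place such refinements enter is in the sharper Kesten-type inequalities where one replaces $t$ on the left by a larger parameter.
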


With this in place, we are ready to prove the relevant lemmas.

\begin{proof}[Proof of Lemma \ref{randomset1}]
Without loss of generality, we can assume that $n \ge 10$.  We use an {\it induction} on $k$.  Set $C= 6(C_0+1) $.   Since $\pr (X=x)  \le n^{-1} $ for any $x$, the claim is trivial for $k \le 4$. From now we can assume $k >4$.

Now let $I$ be the shortest closed interval such that $| I \cap v | = \lceil n/2 \rceil$ and set $t = |I|$ (if there are many such intervals, take any).  By the definition of $I$ and $t$,  $Q(X_j, t) \le \lceil n/2 \rceil / n < 2/3 $, for all $1 \le j \le k$. Notice that

$$\pr ( X =x) =  \sum_{i=1}^n  \pr (X'= x -a_i  \wedge X_k = a_i )=   n^{-1} \sum_{i=1}^n   \pr (X'= x -a_i )  $$

\noindent Let $J:= x- I := \{ x-y,  y \in I\}.$  Then

$$ \sum_{i=1}^k \pr (X'= x -a_i )  = \pr (X' \in J) + \sum_{j, a_j \notin I} \pr (X' = x -a_j ) . $$

By Theorem \ref{Rog},   $\pr (X' \in J)  \le C_0 \sqrt {3/(k-1) }$. Furthermore, by the induction hypothesis  $$\pr (X' = x -a_j )  \le C (n \sqrt {k-1} )^{-1} . $$

\noindent  Thus,

$$ n^{-1}  \sum_{j, a_j \notin I} \pr (X' = x -a_j )  \le  \frac{C}{2}  (n \sqrt {k-1}  )^{-1}, $$ 
which---thanks to the fact that $k \ge 5$ and the definition of $C$---implies that
$$\pr (X =x) \le \frac{1}{ n \sqrt {k-1}} ( C_0 \sqrt  3  +  C/2 )\le C  \frac{1}{ n \sqrt {k}}, $$ 
as desired.
\end{proof}

\begin{proof}[Proof of Lemma \ref{randomset2}]We consider two cases.
\vskip2mm

\textbf{Case 1 ($k \leq n^{0.49}$):}  Suppose $k \le n^{.49}.$  Let $X_1, \dots, X_k$  be chosen uniformly from $A$ (with replacement).  Let $\mathcal{E} $ be the event that the values of $X_1, \dots, X_k$ are different. As $k= o(\sqrt n) $, by the birthday paradox, $\pr (\mathcal {E} ) = 1-o(1) $. On the other hand, if $\mathcal {E} $ occurs then the set formed by $X_1, \dots, X_k$ is uniformly random among the set of all subsets of size $k$. Therefore

$$\pr  (S=x)= \pr (\sum_{i=1}^k X_i =x | \mathcal {E} )  = \pr (\sum_{i=1}^k X_i =x \wedge \mathcal {E} )  \pr (\mathcal {E})^{-1} \le  (1+o(1) ) \pr (\sum_{i=1}^k =x), $$ and the claim follows from Lemma \ref{randomset1}.

\textbf{Case 2 ($k > n^{0.49}$):} We now turn our attention to the case that $k > n^{.49} $.  Without less of generality, we can assume $k \le n/2$.  We use the following  result from Inverse Littlewood-Offord theory (see \cite{NVsurvey} for a general discussion and clarifying details).

\begin{lemma}\label{Inverse} (Sparse Inverse Erd\H{o}s-Littlewood-Offord) \cite{NVsurvey}  Let $c < 1$ and $C$  be positive constants. There are constants $R,A$ depending on $c, C$ such that the following holds.

Let $x_i$ be iid $(0,1)$ random variables with $ n^{-c} \le  \pr  (x_i=1 ) :=\alpha  \le 1/2$.  Let $b_1, \dots, b_n$ be integers such that

$$ \rho:= \sup_a  \pr (b_ 1x_1 +\cdots+ b_n x_n = a) \ge n^{-C} . $$

Then  there exists a proper symmetric generalized arithmetic progression $Q$ of rank $r \le R $  which contains  at least $0.9n$ of the $v_i$ (counting multiplicities) and where $|Q|  \le A  \rho^{-1} (\alpha n)^{-r/2}  +1.$
\end{lemma}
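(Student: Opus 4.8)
The plan is to run the classical two-step scheme of inverse Littlewood--Offord theory. First, a Fourier-analytic step converts the hypothesis $\rho\ge n^{-C}$ into additive structure of a level set of $\theta\mapsto\sum_i\|b_i\theta\|^2$ on $\R/\Z$ (here $\|x\|$ is the distance from $x$ to $\Z$); second, a Freiman-type step reads the progression $Q$ off from that structure, with $|Q|$ controlled by the measure of the level set.

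For the first step I would fix $a$ attaining the supremum in the definition of $\rho$ and apply Fourier inversion on $\R/\Z$:
\[
\rho=\int_0^1\prod_{i=1}^n\bigl((1-\alpha)+\alpha e^{2\pi i b_i\theta}\bigr)e^{-2\pi i a\theta}\,d\theta\;\le\;\int_0^1\prod_{i=1}^n\bigl|(1-\alpha)+\alpha e^{2\pi i b_i\theta}\bigr|\,d\theta .
\]
Since $|(1-\alpha)+\alpha e^{2\pi i t}|^2=1-4\alpha(1-\alpha)\sin^2(\pi t)$, $\sin^2(\pi t)\ge 4\|t\|^2$, and $\alpha\le 1/2$, the product is at most $\exp\bigl(-4\alpha\sum_i\|b_i\theta\|^2\bigr)$. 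Splitting $[0,1)$ into dyadic ranges of $\sum_i\|b_i\theta\|^2$ and using $\rho\ge n^{-C}$, a pigeonhole argument gives a scale $m=O\bigl(\tfrac{\log n}{\alpha}\bigr)$ (implied constant depending on $C$) for which $T:=\{\theta\in[0,1):\sum_i\|b_i\theta\|^2\le m\}$ has measure $|T|\ge\rho/\operatorname{polylog}(n)$. One notes that $0\in T$, $T=-T$, and $T$ is roughly closed under addition ($\theta,\theta'\in T\Rightarrow\sum_i\|b_i(\theta+\theta')\|^2\le 4m$), so $T$ is a symmetric approximate subgroup of $\R/\Z$; also, by Markov, every $\theta\in T$ satisfies $\|b_i\theta\|\le 10\sqrt{m/n}$ for all but $\le 0.01n$ indices.

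For the second step I would extract the progression. As $T$ is symmetric, of measure $\gtrsim\rho$, and has small doubling relative to its own measure, the Freiman/Bogolyubov--Chang machinery on the torus --- after a continuous-to-discrete reduction (pass to a maximal $\delta$-net of $T$ and study the vectors $(\{b_i\theta\})_i$) --- produces a proper symmetric generalized arithmetic progression $P\subseteq\R/\Z$, centered at $0$, of rank $r\le R=R(c,C)$, with $|P|\gtrsim|T|$. Dualizing, the $\ge 0.9n$ indices whose $b_i$ stay small against every element of $P$ have their $b_i$ confined to a proper symmetric GAP $Q$ of rank $r\le R$. To bound $|Q|$ I would compute $|T|$ two ways: from above, $|T|\gtrsim\rho/\operatorname{polylog}(n)$; on the other hand, a Gaussian/central-limit estimate for the $\alpha$-biased sum along each of the $r$ generator directions of $Q$ (each carried by $\ge 0.9n$ of the $b_i$) shows that $\|b_i\theta\|$ can stay small for all $b_i\in Q$ only on a set of measure $\lesssim |Q|^{-1}(\alpha n)^{-r/2}\operatorname{polylog}(n)$ --- the factor $(\alpha n)^{-1/2}$ per rank direction, and $|Q|^{-1}$ from the side lengths. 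Comparing, $|Q|\lesssim\rho^{-1}(\alpha n)^{-r/2}\operatorname{polylog}(n)$; the residual logarithms are absorbed into $A,R$ by the usual refinements (e.g.\ a tensor-power amplification of $\vec b$), yielding $|Q|\le A\rho^{-1}(\alpha n)^{-r/2}+1$.

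The hard part is the second step: upgrading ``$\sum_i\|b_i\theta\|^2$ is small on a set of measure $\gtrsim\rho$'' to an honest \emph{proper} bounded-rank GAP that captures $90\%$ of the $b_i$, and getting the sharp exponent $r/2$ while absorbing the polylogarithmic losses. This structural extraction is the technology-heavy heart of inverse Littlewood--Offord theory, and it is where $R$ and $A$ are pinned down. A route avoiding the torus is to symmetrize, $\pr\bigl(\sum b_i(x_i-x_i')=0\bigr)\ge\rho^2$, condition on the random support of size $\approx\alpha n$ to reduce to a uniformly signed sum of $\approx\alpha n$ of the $b_i$, and feed the resulting profusion of vanishing signed sums into a discrete long-progression/Freiman theorem; but either way all the difficulty lives in this structural step, with the $\alpha$-biasing entering only as bookkeeping that carries the factors of $\alpha$ through the Gaussian estimates --- which is exactly why $(\alpha n)^{-r/2}$ appears.
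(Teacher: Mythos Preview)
The paper does not prove this lemma. It is quoted as a known result from the survey \cite{NVsurvey} (Nguyen--Vu), and is then applied as a black box inside Case~2 of the proof of Lemma~\ref{randomset2}. So there is no ``paper's own proof'' to compare your proposal against.

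That said, your sketch is a reasonable outline of how such inverse Littlewood--Offord statements are actually proved in the Tao--Vu / Nguyen--Vu framework: the Fourier step producing a large level set of $\theta\mapsto\sum_i\|b_i\theta\|^2$, followed by a Freiman-type structural step extracting a bounded-rank GAP containing most of the $b_i$. The first step is essentially complete as written. The second step, as you yourself acknowledge, is where all the content lies, and your write-up there is more of a narrative than a proof: the passage from ``$T$ is a symmetric approximate subgroup of $\R/\Z$'' to ``the $b_i$ lie in a proper GAP $Q$ of rank $\le R$ and size $\le A\rho^{-1}(\alpha n)^{-r/2}+1$'' hides a substantial amount of work (dualizing the torus structure correctly, controlling the rank uniformly in terms of $c,C$, obtaining the sharp $(\alpha n)^{-r/2}$ dependence, and removing the polylogarithmic losses). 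Your remark that ``tensor-power amplification'' absorbs the logs is not quite right --- in the Nguyen--Vu argument the clean bound comes instead from a careful discrete John-type theorem and a direct volume computation, not from amplification. If you wanted to turn this into a self-contained proof you would need to spell out that structural step in full; as a citation of a known lemma, which is what the paper does, nothing more is required.
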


In order to use the above result, we set $b_i =a_i + N$ where $N$ is a large integer to be chosen later. Towards a contradiction, assume that for some $x$

\begin{equation} \label{assumption1} \pr (S =x )  \ge  B  \frac {1}{ n \sqrt k } , \end{equation}, where  $B >0$ is a large constant to be chosen.

Define $x_i$ as in Lemma \ref{Inverse} with $\pr (x_i =1) = \alpha k/n $.  The probability that exactly $k$ of the $x_i$ are non-zero is at least $c_1 k^{-1/2} $ for some constant $c_1 >0$. Thus, \eqref{assumption1} implies that

$$ \rho \ge  \pr (b_1 x_1 + \dots +b_n x_n = x + Nk ) \ge B \frac{1}{ n \sqrt k} c_1 k^{-1/2} = B c_1 (nk)^{-1} . $$

Now we apply Lemma \ref{Inverse}. By choosing $N$ sufficiently large (say $N \ge 2^n$), it is easy to see that there is no symmetric arithmetic progression of length $O(n^2) $ containing at least half of the $b_i$. Thus, the rank $r$ in this lemma is at least $2$. By the conclusion of the lemma $$|Q| \le A B^{-1} c_1^{-1} nk (\alpha n)^{-1}  +1  . $$

By setting $B \ge 2 A c_1^{-1} $, and recalling that $\alpha = k/n$, we have

$$|Q| \le \frac{Ac_1} {B} n +1 \le \frac{n}{2} +1 < 0.9 n , $$  a contradiction.
\end{proof}

\section{Proof of Theorem  \ref{thm:anti_conc_perm} and Corollary \ref{thm:main_anti_conc}}

\begin{proof}[Proof of Theorem \ref{thm:anti_conc_perm}]
Set $\beta (n,k) := \frac {1}{ n \sqrt {\min(k,n-k)} }$.  Recall that  $w = (w_1, \ldots, w_n)$ and $v = (v_1, v_2, \ldots , v_n)$  are increasing and $\Delta = \min_{i\neq j} |v_i - v_j| \neq 0$.

For any $t$, let  $A(t) = \{ i : w_i < t\}$ , $B(t) = \{ i : w_i \geq t\}$. For a permutation $\sigma$, let  $S (\sigma)  = \sum_{i=1} ^{n} w_{\sigma(i)} v_i$.  Fix two numbers $t_1 <t_2$ and let   $A := A(t_1)$ and $B := B(t_2)$.  Then we  define a permutaiton $\sigma$ via the following process:

\begin{itemize}

                \item[(i)] pick $\sigma(i)$ for each $i \notin A \cup B$;

                \item[(ii)] determine the relative ordering of $\sigma(i)$ and $\sigma(j)$ for each $\{i,j\} \subseteq A$ and each $\{i,j\} \subseteq B$;

                \item[(iii)] determine the set $\{ \sigma(i) \ : \ i \in A\}$.

\end{itemize}

Notice that if  $\sigma$ is chosen uniformly at random from $S_n$, then given any outcomes for (i) and (ii), the set in (iii) is distributed uniformly from among all $|A|$-element subsets of $\sigma(A) \cup \sigma(B)$.

Suppose the outcomes of (i) and (ii) are given.  Let $X = \sigma(A) \cup \sigma(B)$, and for each $U \subseteq X$ with $|U| = |A|$, let $S(U)$ denote the sum $\sum_{i=1} ^{n} w_{i} v_{\sigma(i)}$ where $\sigma$ is the permutation that would result from chosing $\sigma(A) = U$ in step (iii).

Define the following partial order $P$ on $|A|$-element subsets of $X$: $U \preceq V$ iff for all $x$ we have $|U \cap (-\infty, x]| \leq |V \cap (-\infty, x]|$.  Stanley \cite{Stanley} proved  that  this poset has Sperner property, which implies  that its width  is given by

                \[
                width(P) = \max_{t} \left| \left \{ U \subseteq \{1, 2, \ldots, n\} \ : \ |U| = k, \text{ and } \sum_{u \in U} u = t \right \} \right|.
                \]

    Furthermore, Lemma \ref{randomset2} implies that the width of $P$ is $O({n \choose k} \beta (k,n)) $.

Next, notice that  $U \preceq V$ iff for all $i$, the $i^{th}$ largest element of $U$ is at least that of $V$.  Thus, if $U \prec V$ and there are no sets $W$ strictly between $U$ and $V$, then $U$ must be of the form $U = \{y\} \cup V \setminus \{x\}$ for some $y > x$.  In this case, we have
\[
S(V) - S(U) = (v_{x} - v_y) (w_{i} - w_{j}),
\]

for some elements $i \in A$ and $j \in B$.  It follows that  if $U \prec V$, then we have
\begin{equation} \label{keybound} S(V) - S(U) \geq \Delta (t_2 - t_1) > 0. \end{equation}

Let $E$ be the collection of sets $U$ for which $S(U) \in I$.  By Dilworth's theorem, we can decompose $E$ into chains $C_1, C_2, \ldots, C_l$, where  $l$ is the width of $E$.  By \eqref{keybound}, we know that each chain in $E$ has size at most $1 + \frac{|I|}{\Delta (t_2 - t_1)}$.  On the other hand, as argued above, the width is at most $O{|X| \choose |A|} \beta (|A|, |A| +|B|) $.

Therefore,
\[
|E| = O( \left(1 + \dfrac{|I|}{\Delta (t_2 - t_1)} \right) {|X| \choose |A|}\beta (|A|, |A|+|B|) )
\]
which implies
\[
\mathbb{P}(S(U) \in I) = O(  \left(1 + \dfrac{|I|}{\Delta (t_2 - t_1)} \right) \beta (|A|, |A|+ |B|)).
\]
This proves the theorem, via a routine conditioning argument.
\end{proof}

\begin{proof}[Proof of Corollary \ref{thm:main_anti_conc}]

Without loss of generality we may assume that $\overline w = 0$, so $\sigma^2 = \sum w_i^2$. Let $j$ be the largest index among negative entries of $w$, and assume without loss of generality that $j \le n/2$. Our first guess might be to choose $i_1 = k < j+1 = i_2$. If there is some index $k \le j$ such that $w_k^2 > \frac{\sigma^2}{100 k \log n}$ then by \cref{thm:anti_conc_perm} this choice yields the claimed bound. Otherwise, we have

\begin{equation}\label{eqn:less_than_zero}
    \sum_{k \le j} w_k^2 \le \sum_{k \le j} \frac{\sigma^2}{100 k \log n} \le \frac{\sigma^2}{100}.
\end{equation}

By Cauchy-Schwarz we conclude $\left|\sum_{k \le j} w_k\right| \le \sigma\sqrt{j}/10 \le \sigma \sqrt{n}/10$. On the other hand, since $\overline w = 0$ we have

\begin{equation*}
    \frac{\sigma\sqrt{n}}{10} \ge \left|\sum_{k \le j} w_k\right| = \left|\sum_{ k > j} w_k\right| \ge \frac{n}{2}w_{n/2}.
\end{equation*}

So we see $w_{n/2} \le \sigma/(5\sqrt{n})$. Since $w_{n/2}$ is so small, our next guess will be to take $i_1 = w_{n/2}$, $i_2 = w_{n-k}$ for some $k < n/2$. If $(w_k-w_{n/2})^2 > \frac{\sigma^2}{10 k \log n}$ then by \cref{thm:anti_conc_perm} this choice yields the corollary. Otherwise, $w_{n-k} \le \frac{\sigma}{5\sqrt{n}}+\frac{\sigma}{\sqrt{10k\log n}}$ and

\begin{equation}\label{eqn:zero_to_n/2}
    \sum_{k \le n/2} w_{n-k}^2 \le \sum_{k \le n/2} \frac{\sigma^2}{10n}+\frac{\sigma^2}{5k\log n} \le \frac{3\sigma^2}{10}.
\end{equation}

Finally, observe

\begin{equation}\label{eqn:n/2_and_up}
    \sum_{k \in [j, n/2]} w_k^2 \le \frac 12\sum_{k \ge j} w_k^2 \le \frac{\sigma^2}{2}.
\end{equation}

Combining (\ref{eqn:less_than_zero}), (\ref{eqn:zero_to_n/2}), and (\ref{eqn:n/2_and_up}) we see

\begin{equation*}
    \sigma^2 = \sum_{k} w_k^2 \le \frac{\sigma^2}{100}+\frac{\sigma^2}{2}+\frac{3\sigma^2}{10} < \sigma^2,
\end{equation*}
which is a contradiction, completing the proof.

\end{proof}


\begin{thebibliography}{99}

 

\bibitem{Erdos} P. Erd\H os, {\it On a lemma of Littlewood and Offord,}  Bull. Amer. Math. Soc. 51 (1945), 898-902.

 

 

\bibitem{Esseen} C.~G.~Ess\'een, \emph{On the Kolmogorov-Rogozin inequality for the concentration function}, Z. Wahrsch. Verw. Gebiete 5 (1966), 210-216.

 

 

\bibitem{Kol1} A.~Kolmogorov,  \emph{Two uniform limit theorems for sums of independent random variables}, Theor. Probab. Appl. 1 (1956), 384-394.

 

 

\bibitem{Kol2} A.~Kolmogorov,  \emph{Sur les propri\'et\'es des fonctions de concentrations de M. P. L\'evy}, Ann. Inst. H. Poincar\'e 16 (1958), 27-34.

 

 

 

\bibitem{LO}  J.~E.~Littlewood and A.~C.~Offord, \emph{On the number of real roots of a random algebraic equation}. III. Rec. Math. Mat. Sbornik N.S.  12 ,  (1943). 277--286.

 

\bibitem{Levy} P. Levy,  Theorie de l'addition des variables al6atoires,  Gauthier-Villars 1937, 1954.

 

\bibitem {Soz} K. S¨oze, {\it Real zeroes of random polynomials, I. Flip-invariance, Tur´an’s lemma, and the Newton-Hadamard polygon,}  Isr.

J. Math. 220 (2017), 817-836.

 

\bibitem {Stanley} R. Stanley, {\it  Weyl groups, the hard Lefschetz theorem, and the Sperner property,}  SIAM J. Alg.

Disc. Math., 1:168–184, 1980

 

 

 

\bibitem{Rog} B.~A.~Rogozin, \emph{An estimate for concentration functions}, Theor. Probab. App1. 6 (1961), 94-97.

 

 

 

\bibitem{NVsurvey} H. Nguyen and V. Vu,

Small Ball Probability, Inverse Theorems, and

Applications, Erd\H{o}s Centennial

Bolya society mathematical studies 25 (2013), 409–463.

 

 

\bibitem{HMS} J. Huang, D.  McKinnon and M.  Satriano, {\it What fraction of an $S_n$-orbit can lie on a hyperplane ?}  Linear Algebra and Its Applications, 613 (2021) 1-23.

 

 

 


 \bibitem {Paw} B. Pawlowski, {\it The fraction of an $S_n$-orbit on a hyperplane,}  Linear Algebra Appl. 702 (2024), 98-111.

\end{thebibliography}
\end{document}